\documentclass[12pt,a4paper]{amsart}
\usepackage{graphicx,amssymb}
\input xy
\xyoption{all}
\usepackage[all]{xy}
\usepackage{hyperref}
\usepackage{mathtools}

\setlength{\oddsidemargin}{-48pt}
 \setlength{\evensidemargin}{-48pt}
 \setlength{\textwidth}{16cm}

\hoffset=1.5cm

\vfuzz2pt 
\hfuzz2pt 

\newtheorem{thm}{Theorem}[section]
\newtheorem{cor}[thm]{Corollary}
\newtheorem{lem}[thm]{Lemma}
\newtheorem{prop}[thm]{Proposition}
\theoremstyle{definition}
\newtheorem{defn}[thm]{Definition}
\newtheorem{rem}[thm]{Remark}

\newtheorem{hyp}[thm]{Hypothesis}
\numberwithin{equation}{section}

\newcommand{\bQ}{\mathbb Q}

\newcommand{\bZ}{\mathbb Z}
\newcommand{\ZZ}{\mathbb Z}

\newcommand{\lra}{\longrightarrow}

\newcommand{\ra}{\rightarrow}

\newcommand{\cO}{\mathcal{O}}

\newcommand{\inj}{\hookrightarrow}
\newcommand{\surj}{\mathrel{\mathrlap{\rightarrow}\mkern1mu\rightarrow}}

\DeclareMathOperator{\Pic}{Pic}

 \DeclareMathOperator{\Div}{{Div}}
\DeclareMathOperator{\End}{{End}}

\begin{document}

\title{Symmetric correspondences with decomposable minimal equation}
\author{Elham Izadi and  Herbert Lange}

\address{E. Izadi\\ Department of Mathematics\\UC San Diego\\USA}
\email{eizadi@math.ucsd.edu}

\address{H. Lange \\ Department Mathematik der Universit\"at Erlangen \\ Germany}
\email{lange@mi.uni-erlangen.de}

\thanks{Part of this work was done while the second author was visiting the University of California San Diego. He wishes to thank UCSD for its support and hospitality.}

\date{\today }
\begin{abstract} 
We study symmetric correspondences with completely decomposable minimal equation on smooth projective 
curves $C$. The Jacobian of $C$ then decomposes correspondingly.
For all positive integers $g$ and $\ell$, we give series of examples of smooth curves $C$ of genus $n^\ell (g-1) +1$ with correspondences satisfying minimal equations of degree $\ell+1$ such that the Jacobian of $C$ has at least $2^\ell$ isogeny components.
\end{abstract}

\maketitle

\section{Introduction}

A correspondence of a smooth projective curve $C$ into itself is a divisor on the product $C \times C$. Correspondences were
extensively studied by the Italian geometers of the 19th century. Their importance lies in the fact that they determine endomorphisms of the Jacobian $J(C)$ of $C$. In fact,
the ring of equivalence classes of these correspondences is isomorphic to the ring of endomorphisms of $J(C)$.  

One way to use  this is given by the well-known bigonal, trigonal and tetragonal (or, more generally, $n$-gonal) constructions,
where one uses correspondences to detect isomorphisms between certain Prym varieties and Jacobians.
Another way is to construct Prym-Tyurin varieties, that is, roughly speaking, abelian subvarieties of Jacobians 
to which the restriction of the canonical polarization is a multiple of a principal polarization.\\

Here we study symmetric completely decomposable correspondences, that is correspondences which satisfy a minimal equation 
over the rationals all of whose zeros are rational numbers. It is easy to see  (Proposition \ref{prop2.1}) that such a correspondence induces a decomposition
up to isogeny of the corresponding Jacobian into a product of abelian subvarieties which are given by the zeros of the minimal polynomial.

We give a series of examples for this: Consider a smooth projective curve $X$ of genus $g \geq 1$ with an \'etale cover $f: Y \ra X$ of degree $n\geq 2$. Given a set $\{\sigma_1, \dots, \sigma_\ell \}$ of automorphisms for any positive integer $\ell$, we let $C$ be the $\ell$-fold fibre product of $Y$ with itself over $X$ for the compositions $\sigma_if: Y \ra X$ for $i = 1, \dots, \ell$:
$$
C := Y_{\sigma_1} \times_X Y_{\sigma_2} \times_X \cdots \times_X Y_{\sigma_\ell},
$$
where $Y_{\sigma_i} := Y$ but with the \'etale cover $\sigma_i \circ f : Y_{\sigma_i} \ra X$.
The curve $C$ is always smooth and we show in Section \ref{secirred} that it is irreducible if and only if the monodromy group of the cover $f : Y \ra X$ is $\ell$-transitive.

The curve $C$ admits an effective correspondence $D$ of degree $\ell(n-1)$ given by
\[
\begin{split}
D:= \left\{ (((\sigma_1 x)_{j_1}, \ldots, (\sigma_\ell x)_{j_\ell}), ((\sigma_1 x)_{k_1}, \ldots, (\sigma_\ell x)_{k_\ell})) \in C \times C \mid \right.\\
\left. k_i = j_i \mbox{ except for exactly one index } i \right\}.
\end{split}
\]
Note that for $\ell = 1$ and $\sigma_1 = id$ we have $C =Y$ and $D$  is the well known correspondence
$$
D = \{(y_1,y_2) \;|\; y_1 \neq y_2, f(y_1) = f(y_2) \} \subset Y^2.
$$ 
Our main result is:

\begin{thm}
The correspondence $D$ is completely decomposable and satisfies the equation
$$
\prod_{r-0}^\ell (D -nr + \ell) = 0.
$$
\end{thm}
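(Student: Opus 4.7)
The strategy is to decompose $D$ as a sum of ``trace minus diagonal'' correspondences associated to the individual factors of the fibre product, to show that each summand satisfies a quadratic minimal equation coming from the corresponding \'etale cover, to prove that these summands pairwise commute as endomorphisms of $J(C)$, and then to read off the eigenvalues of the sum.

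Concretely, the defining description of $D$ gives a tautological decomposition $D = \sum_{i=1}^\ell D_i$, where $D_i$ consists of those pairs of points of $C$ that differ only in the $i$-th coordinate. Let $C'_i$ denote the partial fibre product $Y_{\sigma_1} \times_X \cdots \times_X \widehat{Y_{\sigma_i}} \times_X \cdots \times_X Y_{\sigma_\ell}$ obtained by omitting the $i$-th factor. The natural projection $\pi_i : C \to C'_i$ is obtained from $\sigma_i f : Y_{\sigma_i} \to X$ by base change, hence is \'etale of degree $n$, and by construction $D_i = \{(y,y') \in C \times C : \pi_i(y) = \pi_i(y'),\ y \neq y'\}$. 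Writing $T_i := D_i + \Delta_C$, a direct computation identifies $T_i$ with the endomorphism $\pi_i^*\pi_{i*}$ of $J(C)$; since $\pi_{i*}\pi_i^* = n \cdot \mathrm{id}$ this yields $T_i^2 = n T_i$, equivalently $(D_i + 1)(D_i - (n-1)) = 0$.

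Next I will verify the pairwise commutation $D_i D_j = D_j D_i$ for $i \neq j$. Let $C''_{ij}$ denote the partial fibre product omitting both the $i$-th and $j$-th factors; the projections fit into a cartesian square of \'etale morphisms
\[
\begin{array}{ccc}
C & \xrightarrow{\pi_i} & C'_i \\
\pi_j \downarrow & & \downarrow \pi'_j \\
C'_j & \xrightarrow{\pi'_i} & C''_{ij}
\end{array}
\]
and flat base change gives $\pi_{i*}\pi_j^* = (\pi'_j)^*\pi'_{i*}$. Combined with the functoriality identities $\pi'_j \circ \pi_i = \pi'_i \circ \pi_j$, this shows $T_iT_j = T_jT_i$ and therefore $D_iD_j = D_jD_i$.

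Since the $D_i$ commute pairwise and each is annihilated by the separable rational polynomial $(X+1)(X-(n-1))$, they generate a commutative semisimple $\QQ$-subalgebra of $\End_\QQ J(C)$ whose characters send each $D_i$ to $-1$ or to $n - 1$. On a character for which exactly $r$ of the $D_i$ take the value $n-1$, the sum $D = \sum_i D_i$ takes the value $r(n-1) + (\ell - r)(-1) = nr - \ell$. As $r$ ranges over $\{0,1,\dots,\ell\}$, this produces the stated equation $\prod_{r=0}^\ell (D - nr + \ell) = 0$ and, all roots being rational, exhibits $D$ as completely decomposable. I expect the commutativity step to be the main technical point: it rests on identifying the correct cartesian square and applying flat base change carefully, whereas the decomposition of $D$, the quadratic equation for each $D_i$, and the final simultaneous-spectrum argument are essentially formal once the geometric picture is set up.
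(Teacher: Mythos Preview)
Your proof is correct and takes a genuinely different route from the paper's. The paper filters $JC$ by the subvarieties $F_r := \sum_{|J|\le r}\pi_J^* JC_J$ (for $J\subset\{1,\dots,\ell\}$) and verifies by a direct divisor computation (its Theorem~\ref{thmDsubq}) that $D$ acts on each graded piece $F_r/F_{r-1}$ as the scalar $n(\ell-r)-\ell$; the product equation then follows because the linear factors successively annihilate the filtration. Your argument is more structural: the decomposition $D=\sum_i D_i$ with $D_i=\pi_i^*\pi_{i*}-\Delta$, the quadratic relations $(D_i+1)(D_i-n+1)=0$, and the commutation $D_iD_j=D_jD_i$ all hold already in $\End(\Div C)$, so the $\QQ$-algebra generated by the $D_i$ is a quotient of $\QQ[X_1,\dots,X_\ell]/\bigl((X_i+1)(X_i-n+1)\bigr)\cong\QQ^{2^\ell}$, in which $\prod_{r=0}^\ell\bigl(\sum_i X_i-nr+\ell\bigr)$ visibly vanishes. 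This yields the equation with essentially no computation and in fact produces a finer joint spectral decomposition indexed by subsets $S\subset\{1,\dots,\ell\}$ rather than only their cardinalities. The paper's approach, by contrast, simultaneously identifies the eigen-abelian varieties $A_{nr-\ell}$ with the images of the $JC_J$ (its Corollary~\ref{corPol}), which is used in the later dimension counts; your method recovers this as well, but only after the additional step of matching the $(-1)$- and $(n{-}1)$-eigenspaces of each $D_i$ with the Prym and pullback pieces of the cover $\pi_i$.
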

We also identify the eigen-abelian subvarieties of the Jacobian $JC$ for the eigenvalues $n(\ell -r) - \ell$ for all $r$ (Corollary \ref{corPol}). We compute their dimensions (Lemma \ref{lemdim}) and show that $C$ has principally polarized quotients $(B, \Xi)$ such that the image of $C$ in $B$ has cohomology class equal to $\ell ! n^\ell \frac{[\Xi]^{d-1}}{(d -1)!}$ (Lemma \ref{lemclass}).

It follows from our results that the Jacobian of $C$ is isogenous to the product of at least $2^\ell$ abelian varieties. To our knowledge, up to now, examples of such curves were only obtained from curves with sufficiently large automorphism groups (see, e.g., \cite{CLRR2009}). Starting with an arbitrary curve $X$ of genus $\geq 2$ and any \'etale cover $Y\ra X$ of degree $n\geq 2$, we can construct the curve $C$ with $\sigma_1 = \ldots = \sigma_\ell = Id$. In this way we obtain a family of dimension $3g-3$ of curves $C$ of genus $n^\ell (g-1)+1$ with at least $2^\ell$ isogeny factors.

In Section \ref{secprelim} we define completely decomposable correspondences and derive the corresponding decomposition of the Jacobian.
Section \ref{secmain} contains the proof of the main theorem and its consequences. In Section \ref{secspec} we mention some special cases. Finally, Section \ref{secirred} contains the statement and proof of the irreducibility criterion for $C$.
 
\section{Correspondences on a curve $C$}\label{secprelim}

Let $C$ be a smooth projective curve over an algebraically closed field of characteristic 0. A  
{\it correspondence of $C$ of bidegree} $(d_1,d_2) \in \ZZ^2$ is given by a divisor 
$D$ of $ C \times C$ 
such that the projections $p_i: D \ra C$ are of degree $d_i$ for $i = 1$ and 2.  
Here the degree $d_i$ is defined as follows: If $D =  \sum_k a_k D_k$ with reduced and irreducible curves 
$D_k \subset C \times C$, then $d_i = \sum_k a_k \deg(p_i: D_k \ra C)$.

The transposed correspondence of $D$ is by definition the correspondence
$$
D^t := \tau^*D
$$
where $\tau$ denotes the natural involution of $C \times C$ which swaps the factors.
A correspondence is called {\it symmetric} if $D^t = D$. 
For a symmetric correspondece we have $d_1 = d_2$ and $d := d_1$ is called its {\it degree}.

Any effective correspondence of bidegree $(d_1,d_2)$ gives an
algebraic map
\begin{equation}\label{eqDmap}
D: C \ra C^{(d_1)},    \quad   x \mapsto D(x) =p_{2*} p_1^*(x).
\end{equation}
Here, as usual, $C^{(d_1)}$ denotes the $d_1$-th symmetric power of $C$.
Since any divisor on $C\times C$ can be written in a unique way as the difference of 2 effective divisors, it is 
clear how to extend this definition to any correspondence.
 Often 
correspondences are defined via such maps. If we define
$$
D\left(\sum_k a_kx_k\right) := \sum_k a_k D(x_k),
$$
for a correspondence $D$ and any divisor 
$\sum_k a_k x_k$ of $C$, it is clear how to define the $n$-fold power $D^n$ of $D$ as a group homomorphism from the group of divisors $\Div C$ to itself.  Hence $\ZZ[D]\subset \End (\Div C)$ is a $\ZZ$-module.

\begin{defn}
Assume that $\bZ [D]\subset \End (\Div C)$ is finitely generated. Let $\varphi$ be the generator of the ideal of polynomials of $\bQ [X]$ vanishing on $D$ such that
\begin{enumerate}
\item $\varphi$ has integer coefficients and 
\item the leading coefficient of $\varphi$ is positive and minimal among the positive leading coefficients of the generators with integer coefficients.
\end{enumerate}
We call $\varphi$ the minimal polynomial of $D$.
\end{defn}

We say $\varphi$ is {\it completely decomposable} if all its zeros are integers, i.e., if
$$
\varphi = a \prod_{i=1}^k (X - a_i) \quad \mbox{with} \quad a, a_i \in \ZZ \quad \mbox{for all} \; i.
$$

Two correspondences $D_1$ and $D_2$ are equivalent if there exist line bundles $L$ and $M$ on $C$ such that (see \cite[Section 11.5]{cav})
\[
\cO_{C^2} (D_1) \cong \cO_{C^2} (D_2)\otimes p_1^* L \otimes p_2^* M.
\]
The set of equivalence classes of correspondences is a ring (see \cite[Exercise 11.12.14]{cav}) which, 
according 
to \cite[Theorem 11.5.1]{cav}, is isomorphic to the endomorphism ring $\End(JC)$ of the Jacobian of 
$C$. Hence, associating to $D$ its equivalence class defines a natural map
$$
\gamma: \ZZ[D] \lra \End(JC).
$$
We denote the image of $D$ by $\gamma_D$.

\begin{defn}
The minimal polynomial $\psi$ of $\gamma_D$ is the generator of the ideal of polynomials of $\bQ [X]$ vanishing on $\gamma_D$ such that
\begin{enumerate}
\item $\psi$ has integer coefficients and 
\item the leading coefficient of $\psi$ is positive and minimal among the positive leading coefficients of the generators with integer coefficients.
\end{enumerate}
\end{defn}

Clearly $\psi$ is a divisor of $\varphi$, hence also completely decomposable 
with pairwise different zeros. We may choose the indices in such a way that
$$
\psi(D) = b \prod_{i=1}^\ell (D - a_i )
$$
for $\ell \leq k$ and $b \mid a$.
For all $i = 1, \dots, \ell$ we define $A_i$ to be the component of the identity of the kernel of the endomorphism $\gamma_D - a_i$:
$$
A_i :=  \ker(\gamma_D - a_i)^0 \subset JC.
$$

\begin{prop} \label{prop2.1}
Suppose the effective correspondence $D$ of $C$ admits a completely decomposable minimal
polynomial. Then, 
with the notation above, the addition map
$$ 
\alpha: A_1 \times \cdots \times A_\ell \ra JC
$$
is an isogeny.
\end{prop}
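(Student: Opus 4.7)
The plan is to work on the tangent space $T_0 JC$, where $\gamma_D$ acts as a linear operator $\rho$ annihilated by the minimal polynomial $\psi$, and to exploit that $\psi$ splits over $\QQ$ with pairwise distinct roots in order to diagonalize $\rho$.

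First I would observe that $\End(JC)$ is torsion-free as a $\ZZ$-module, so the identity $b\prod_{i=1}^\ell(\gamma_D-a_i)=0$ in $\End(JC)$ upgrades to $\prod_{i=1}^\ell(\gamma_D-a_i)=0$; differentiating at the origin gives $\prod_{i=1}^\ell(\rho-a_i)=0$ on $T_0 JC$. Because the $a_i$ are pairwise distinct rationals, $\rho$ is diagonalizable, yielding an internal direct sum decomposition
\[
T_0 JC \;=\; \bigoplus_{i=1}^\ell U_i, \qquad U_i := \ker(\rho-a_i).
\]

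Second, I would identify $T_0 A_i = U_i$. Indeed, $A_i$ is the identity component of the subgroup scheme $\ker(\gamma_D-a_i)$, whose tangent space at the origin is by construction the kernel of the differential $\rho-a_i$. The differential of $\alpha$ at the origin is then the map
\[
d\alpha_0 : U_1\oplus\cdots\oplus U_\ell \lra T_0 JC, \qquad (u_1,\dots,u_\ell)\longmapsto u_1+\cdots+u_\ell,
\]
which is precisely the canonical isomorphism provided by the eigenspace decomposition above. Hence $\alpha$ is \'etale at the origin and, by translation invariance, \'etale everywhere. In particular source and target have equal dimension, so $\alpha$ is a finite \'etale morphism of abelian varieties of the same dimension and therefore an isogeny.

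The decisive ingredient is the diagonalizability of $\rho$, which is automatic once $\psi$ is completely decomposable with pairwise distinct roots, so I do not expect any step to present genuine difficulty. An equivalent alternative -- perhaps closer in spirit to the paper's framework -- is to apply the Chinese remainder theorem in $\QQ[X]/(\psi)$ to produce an integer $N$ and polynomials $q_i \in \ZZ[X]$ with $\sum_{i=1}^\ell q_i(\gamma_D)\psi_i(\gamma_D)=N\cdot \mathrm{id}_{JC}$ in $\End(JC)$, where $\psi_i=\prod_{j\neq i}(X-a_j)$; since $(\gamma_D-a_i)\psi_i(\gamma_D)=0$, each summand $q_i(\gamma_D)\psi_i(\gamma_D)(JC)$ is a connected subgroup contained in $\ker(\gamma_D-a_i)$, hence in $A_i$, so $\alpha$ is surjective, after which the tangent-space dimension count above supplies the finiteness of the kernel.
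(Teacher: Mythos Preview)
Your argument is correct and is essentially the paper's own proof: both reduce to showing that $d\alpha_0$ is an isomorphism by identifying $T_0 A_i$ with the $a_i$-eigenspace of the differential of $\gamma_D$ and invoking the eigenspace decomposition of $T_0 JC$ coming from the split, separable polynomial $\psi$. Your added remarks (dropping the leading coefficient $b$ via torsion-freeness of $\End(JC)$, and the alternative CRT surjectivity argument) are fine elaborations but not needed for the core proof.
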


\begin{proof}
The map $\alpha$ is an isogeny if and only if its differential is an isomorphism of tangent spaces at 0.
Now, the tangent space of $A_i$, as a subspace of the tangent space of $JC$ at 0, is the 
corresponding eigenspace 
of the differential of $\gamma_D$ at $0$. Since the $a_i$ are pairwise distinct, 
the tangent spaces of the $A_i$ give a decomposition of the tangent space of $JC$. So the differential of $\alpha$ is given by the eigenspace decomposition of the tangent space and is thus an isomorphism.
\end{proof}

\begin{rem}
Whereas a correspondence $D$ of $C$ does not necessarily admit a minimal polynomial equation, its image 
$\gamma_D \in \End(JC)$ does. Hence an analogous result to 
Proposition \ref{prop2.1} is valid in greater generality, expressing it  for endomorphisms instead of correspondences. We chose the above form, since this is exactly what we need for our applications. 
\end{rem}

\section{\'Etale covers of curves with automorphisms}\label{secmain}

\subsection{} Let $X$ be a smooth curve of genus $g$ with an \'etale cover
$$
f : Y \lra X,
$$
of degree $n \geq 2$. For an automorphism $\sigma$ of $X$, denote
\[
Y_\sigma := Y, \:\: \hbox{ but with the covering } \quad f_\sigma := \sigma \circ f : Y \lra X.
\]
Given a set $\{ \sigma_1, \ldots, \sigma_\ell \}$ of automorphisms of $X$, let $C_{\sigma_1, \ldots , \sigma_\ell}$ denote the fiber product:
\begin{equation}  \label{diag3.1}
\xymatrix{
C_{\sigma_1, \ldots , \sigma_\ell} := Y_{\sigma_1} \times_X \ldots  \times_X Y_{\sigma_\ell}.
}
\end{equation}
Note that the automorphisms are allowed to be the identity and need not be distinct. If we denote the fibre of $f$ over $x\in X$ by
$$
f^{-1}(x) = \{ x_1, \dots , x_n \},
$$
then, as a set,
$$
C_{\sigma_1, \ldots , \sigma_\ell} = \{ ((\sigma_1 x)_{j_1}, \ldots, (\sigma_\ell x)_{j_\ell}) \mid 1 \leq j_i \leq n \} \subset Y^{\ell}.
$$

Since $f$ is \'etale, so are $\sigma_i f$ and the projections $q_i : C_{\sigma_1, \ldots , \sigma_\ell} \ra Y_{\sigma_i}$ for $i = 1, \ldots, \ell$. Hence $C_{\sigma_1, \ldots , \sigma_\ell}$ is smooth. In general it is not irreducible.  In Section \ref{secirred} we show that $C$ is irreducible if and only if the monodromy group of the cover $f : Y \ra X$ is $\ell$-transitive.

For $((\sigma_1 x)_{j_1}, \ldots, (\sigma_\ell x)_{j_\ell})\in C_{\sigma_1, \ldots , \sigma_\ell}$, define
\begin{equation}\label{eqDsymmap}
D((\sigma_1 x)_{j_1}, \ldots, (\sigma_\ell x)_{j_\ell}) := \sum_{k \neq j_1} ((\sigma_1 x)_{k}, \ldots, (\sigma_\ell x)_{j_\ell}) + \ldots + \sum_{k \neq j_\ell} ((\sigma_1 x)_{j_1}, \ldots, (\sigma_\ell x)_{k}).
\end{equation}

\begin{lem} \label{lemcor}
$D$ is a fixed-point free effective symmetric correspondence on $C_{\sigma_1, \ldots , \sigma_\ell}$ of degree $\ell (n-1)$.
\end{lem}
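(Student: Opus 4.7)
The plan is to realize $D$ as a reduced effective divisor on $C \times C$ (writing $C := C_{\sigma_1,\ldots,\sigma_\ell}$) by decomposing it into $\ell$ transparent pieces according to which coordinate changes, and then to read off every claimed property from that description.

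For each $i \in \{1,\ldots,\ell\}$, let $C^{(i)}$ denote the fibre product over $X$ of all the $Y_{\sigma_j}$ with $j \neq i$, and let $\pi_i : C \to C^{(i)}$ be the natural forgetful projection. Since $\sigma_i f : Y_{\sigma_i} \to X$ is \'etale of degree $n$, so is $\pi_i$. I form the fibered square
\[
\widetilde{D}_i := C \times_{C^{(i)}} C \subset C \times C,
\]
whose points are the pairs $((y_1,\ldots,y_\ell),(z_1,\ldots,z_\ell))$ with $y_j = z_j$ for all $j \neq i$. By base change, $\widetilde{D}_i \to C$ is \'etale of degree $n$. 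The diagonal $\Delta \subset C \times C$ lies in $\widetilde{D}_i$ as the image of a section of this \'etale map, hence is clopen in $\widetilde{D}_i$. Therefore $D_i := \widetilde{D}_i \setminus \Delta$ is a reduced effective divisor on $C \times C$ which is \'etale of degree $n-1$ over $C$ via either projection.

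These $\ell$ divisors are pairwise disjoint: if $(y,z) \in D_i \cap D_j$ with $i \neq j$, then $y_k = z_k$ for every $k$ (each $k$ differs from at least one of $i,j$), forcing $y = z$ and contradicting $y_i \neq z_i$. Setting
\[
D := \sum_{i=1}^\ell D_i,
\]
we get a reduced effective divisor, and for each point $p = ((\sigma_1 x)_{j_1},\ldots,(\sigma_\ell x)_{j_\ell}) \in C$ the scheme-theoretic fibre $p_1^{-1}(p) \cap D$ is exactly the reduced sum of the $\ell(n-1)$ points appearing on the right-hand side of \eqref{eqDsymmap}. Thus the map \eqref{eqDmap} attached to this divisor coincides with \eqref{eqDsymmap}.

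The remaining assertions then follow at once: the degree is $\sum_{i=1}^\ell(n-1) = \ell(n-1)$; each $\widetilde{D}_i$ is cut out by a condition symmetric in $(y,z)$, so $\tau^*\widetilde{D}_i = \widetilde{D}_i$, $\tau^*D_i = D_i$, and hence $D^t = D$; and $D$ is fixed-point free because the diagonal was explicitly removed from every summand. The only step calling for genuine care is the identification of $\widetilde{D}_i$ with the disjoint union $\Delta \sqcup D_i$; this is precisely where \'etaleness (rather than mere finiteness) of $f$ is essential, since otherwise the diagonal could fail to be a clopen component and $D_i$ would not be a well-defined reduced divisor.
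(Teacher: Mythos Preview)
Your proof is correct and follows essentially the same approach as the paper: both identify $D$ as the locus of pairs in $C\times C$ agreeing in all but exactly one coordinate, then read off effectiveness, symmetry, degree $\ell(n-1)$, and fixed-point-freeness (the latter from \'etaleness of $f$). Your version is more carefully packaged---you decompose $D$ into the $\ell$ pieces $D_i = (C\times_{C^{(i)}}C)\setminus\Delta$ and use \'etaleness explicitly to split off the diagonal as a clopen component---whereas the paper simply writes down the set-theoretic description and asserts the conclusions.
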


\begin{proof}
As a curve in $C\times C$, we have
\[
\begin{split}
D:= \{ (((\sigma_1 x)_{j_1}, \ldots, (\sigma_\ell x)_{j_\ell}), ((\sigma_1 x)_{k_1}, \ldots, (\sigma_\ell x)_{k_\ell})) \in C_{\sigma_1, \ldots , \sigma_\ell} \times C_{\sigma_1, \ldots , \sigma_\ell} \mid \\
k_i = j_i \mbox{ except for exactly one index } i \}.
\end{split}
\]
This description shows that $D$ is an 
effective symmetric correspondence of degree $\ell (n-1)$ of $C_{\sigma_1, \ldots , \sigma_\ell}$. Since $f: Y \ra X$ is \'etale, the correspondence $D$ is fixed-point free.
\end{proof}

\subsection{} From now on, to alleviate the notation, we write
\[
C := C_{\sigma_1, \ldots , \sigma_\ell}.
\]
Also, for each subset $J = \{ i_1, \ldots, i_r \} \subset \{ 1, \ldots, \ell \}$, we put $C_J := C_{\sigma_{i_1}, \ldots , \sigma_{i_r}}$ and, for any two subsets $I,J$ such that $I\subset J$, we let
\[
\pi_J : C \twoheadrightarrow C_J, \quad \pi_{I,J} : C_J \twoheadrightarrow C_I
\]
be the natural projections. For $J = \varnothing$, we have $C_J = C_\varnothing = X$ and $\pi_\varnothing : C \ra X$ is the unique map to $X$. For $J = \{ i \}$, we have $C_J = Y_{\sigma_i}$ and $\pi_J = \pi_{\{i\}} : C \ra Y_{\sigma_i}$ is the i-th projection of the fiber product. Note that the correspondence $D$ induces endomorphisms of all the subquotients $\pi_J^* JC_J / \sum_{I \subset J} \pi_I^* JC_I$ of $JC$. We have

\begin{thm}\label{thmDsubq}

For each $r \in \{0, 1, \ldots , \ell \}$ and each subset $J = \{ i_1, \ldots, i_r \} \subset \{ 1, \ldots, \ell \}$ of cardinality $r$, the endomorphism induced by the correspondence $D$ on the subquotient $\pi_J^* JC_J / \sum_{I \subset J} \pi_I^* JC_I$ is equal to multiplication by $n(\ell -r) -\ell$ .

\end{thm}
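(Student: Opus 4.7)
The plan is to compute the composite $D \circ \pi_J^*$ explicitly on divisors of $C_J$ and show that, modulo $\sum_{I \subsetneq J} \pi_I^* \Div(C_I)$, it equals multiplication by $n(\ell-r) - \ell$. Fix $J = \{i_1,\dots,i_r\} \subset \{1,\dots,\ell\}$, put $J^c := \{1,\dots,\ell\} \setminus J$, and take a point $p = (y_{i_1},\dots,y_{i_r}) \in C_J$ lying over some $x \in X$. Since $\pi_J : C \to C_J$ is \'etale of degree $n^{\ell - r}$, the pullback $\pi_J^* p$ is the sum of the $n^{\ell-r}$ points $q = (z_1,\dots,z_\ell) \in C$ with $z_i = y_i$ for $i \in J$ and $z_i$ ranging freely over $(\sigma_i f)^{-1}(x)$ for $i \in J^c$. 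The next step is to apply the defining formula \eqref{eqDsymmap} to $\pi_J^* p$ and split the resulting double sum by whether the altered index $i$ belongs to $J^c$ or to $J$.

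For $i \in J^c$, the coordinate $z_i$ already varies over the whole fiber $(\sigma_i f)^{-1}(x)$ as $q$ ranges over $\pi_J^{-1}(p)$, so each point of $\pi_J^{-1}(p)$ is produced exactly $n-1$ times, once for each admissible value of the original $z_i$. The contribution from a single $i \in J^c$ is therefore $(n-1)\pi_J^* p$, and summing over the $\ell - r$ indices in $J^c$ yields $(\ell-r)(n-1)\pi_J^* p$. For $i \in J$, by contrast, $y_i$ is fixed in $\pi_J^* p$; for $w \in (\sigma_i f)^{-1}(x) \setminus \{y_i\}$ let $p'_w \in C_J$ denote $p$ with $y_i$ replaced by $w$, so that the sum over the free coordinates in $J^c$ contributes $\pi_J^* p'_w$. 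The map $\pi_{J\setminus\{i\},J} : C_J \to C_{J\setminus\{i\}}$ is \'etale of degree $n$ with fiber $\{p\} \cup \{p'_w : w \ne y_i\}$ over $\pi_{J\setminus\{i\},J}(p)$, giving the divisor identity
\[
\sum_{w \neq y_i} p'_w \;=\; \pi_{J\setminus\{i\},J}^*\,\pi_{J\setminus\{i\},J}(p) \;-\; p.
\]
Pulling back by $\pi_J$ and using $\pi_J^* \circ \pi_{J\setminus\{i\},J}^* = \pi_{J\setminus\{i\}}^*$, the contribution of $i \in J$ becomes $\pi_{J\setminus\{i\}}^*\,\pi_{J\setminus\{i\},J}(p) - \pi_J^* p$; summing over $i \in J$ then produces $-r\,\pi_J^* p$ plus a divisor in $\sum_{I \subsetneq J} \pi_I^* \Div(C_I)$.

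Combining the two contributions gives
\[
D(\pi_J^* p) \;\equiv\; \bigl[(\ell-r)(n-1) - r\bigr]\,\pi_J^* p \;=\; \bigl[n(\ell-r) - \ell\bigr]\,\pi_J^* p
\]
modulo $\sum_{I\subsetneq J} \pi_I^* \Div(C_I)$, which is exactly the claim on divisors and descends to the asserted scalar action on the subquotient of $JC$. The main obstacle is mostly bookkeeping: one needs the divisor equality above (not merely a class identity) in order to isolate the $J$-part of the action as a genuine sum of pullbacks from the various $C_{J\setminus\{i\}}$, and to be sure no cross-terms between distinct $I \subsetneq J$ are left unaccounted for. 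Once this is in hand, the theorem reduces to the elementary identity $(n-1)(\ell-r) - r = n(\ell-r) - \ell$.
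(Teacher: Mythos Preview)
Your proof is correct and follows essentially the same approach as the paper's: both compute $D(\pi_J^* p)$ by splitting the sum according to whether the altered coordinate lies in $J$ or in $J^c$, identify the $J^c$-contribution as $(\ell-r)(n-1)\pi_J^* p$, and recognize that the $J$-contribution yields $-r\,\pi_J^* p$ plus pullbacks from the smaller $C_{J\setminus\{i\}}$. The only cosmetic difference is that the paper adds $r\,\pi_J^* p$ first and then observes that the resulting ``completed'' sums over the $J$-coordinates are full-fiber pullbacks, whereas you write the same fact via the subtraction identity $\sum_{w\ne y_i} p'_w = \pi_{J\setminus\{i\},J}^*\pi_{J\setminus\{i\},J}(p) - p$; your formulation makes it more explicit which $I\subsetneq J$ the correction terms come from.
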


\begin{proof}
Without loss of generality, we may assume that $J = \{1, \ldots, r\}$. Consider a general point of $C_J$. We can choose the indices in such a way that this is $((\sigma_1 x)_1, \ldots , (\sigma_r x)_1)$. We have
$$
\pi_J^* \left((\sigma_1 x)_1, \ldots , (\sigma_r x)_1\right) = \sum_{1 \leq i_{r+1}, \ldots, i_\ell \leq n}\left((\sigma_1 x)_1, \ldots , (\sigma_r x)_1, (\sigma_{r+1} x)_{i_{r+1}}, \ldots, (\sigma_\ell x)_{i_\ell}\right)
$$
and
\[
\begin{split}
D\pi_J^* \left( (\sigma_1 x)_1, \ldots , (\sigma_r x)_1 \right) =
\sum_{1 \leq i_{r+1}, \ldots, i_\ell \leq n}
\left(
\sum_{2 \leq k \leq n} \left((\sigma_1 x)_k, \ldots , (\sigma_r x)_1, (\sigma_{r+1} x)_{i_{r+1}} , \ldots , (\sigma_{\ell} x)_{i_\ell}\right) + 
\ldots \right. \\
\ldots + \sum_{2 \leq k \leq n} \left((\sigma_1 x)_1, \ldots , (\sigma_r x)_k,
(\sigma_{r+1} x)_{i_{r+1}}, \ldots, (\sigma_{\ell} x)_{i_{\ell}} \right) \\
+ \sum_{1 \leq k \leq n, k \neq i_{r+1}} \left( (\sigma_1 x)_1, \ldots, (\sigma_r x)_1,
(\sigma_{r+1} x)_k, \ldots, (\sigma_\ell x)_{i_{\ell}} \right) + \ldots \\
\ldots + \left. \sum_{1 \leq k \leq n, k \neq i_{\ell}}
\left( (\sigma_1 x)_1, \ldots , (\sigma_r x)_1,
(\sigma_{r+1} x)_{i_{r+1}}, \ldots, (\sigma_{\ell} x)_k \right) \right),
\end{split}
\]
hence
\[
\begin{split}
(D + r )\pi_J^* \left( (\sigma_1 x)_1, \ldots , (\sigma_r x)_1 \right) =
\sum_{1 \leq k, i_{r+1}, \ldots, i_\ell \leq n}
\left(
\left((\sigma_1 x)_k, \ldots, (\sigma_r x)_1, (\sigma_{r+1} x)_{i_{r+1}}, \ldots,(\sigma_{\ell} x)_{i_\ell}\right) + 
\ldots \right. \\
\ldots + \left. \left((\sigma_1 x)_1, \ldots , (\sigma_r x)_k,
(\sigma_{r+1} x)_{i_{r+1}}, \ldots, (\sigma_{\ell} x)_{i_{\ell}} \right) \right) \\
+ \sum_{1 \leq i_{r+1}, \ldots, i_\ell \leq n}
\left(
\sum_{1 \leq k \leq n, k \neq i_{r+1}} \left( (\sigma_1 x)_1, \ldots, (\sigma_r x)_1,
(\sigma_{r+1} x)_k, \ldots, (\sigma_\ell x)_{i_{\ell}} \right) + \ldots \right. \\
\ldots + \left. \sum_{1 \leq k \leq n, k \neq i_{\ell}}
\left( (\sigma_1 x)_1, \ldots , (\sigma_r x)_1,
(\sigma_{r+1} x)_{i_{r+1}}, \ldots, (\sigma_{\ell} x)_k \right) \right).
\end{split}
\]
Now note that
\[
\begin{split}
\sum_{1 \leq k, i_{r+1}, \ldots, i_\ell \leq n}
\left(
\left((\sigma_1 x)_k, \ldots, (\sigma_r x)_1, (\sigma_{r+1} x)_{i_{r+1}}, \ldots,(\sigma_{\ell} x)_{i_\ell}\right) + 
\ldots \right. \\
\ldots + \left. \left((\sigma_1 x)_1, \ldots , (\sigma_r x)_k,
(\sigma_{r+1} x)_{i_{r+1}}, \ldots, (\sigma_{\ell} x)_{i_{\ell}} \right) \right)
\end{split}
\]
belongs to $\sum_{I \subset J} \pi_I^* \Pic C_I$, while
\[
\begin{split}
\sum_{1 \leq i_{r+1}, \ldots, i_\ell \leq n}
\left(
\sum_{1 \leq k \leq n, k \neq i_{r+1}} \left( (\sigma_1 x)_1, \ldots, (\sigma_r x)_1,
(\sigma_{r+1} x)_k, \ldots, (\sigma_\ell x)_{i_{\ell}} \right) + \ldots \right. \\
\ldots + \left. \sum_{1 \leq k \leq n, k \neq i_{\ell}}
\left( (\sigma_1 x)_1, \ldots , (\sigma_r x)_1,
(\sigma_{r+1} x)_{i_{r+1}}, \ldots, (\sigma_{\ell} x)_k \right) \right) \\
= (n-1)(l-r) \pi_J^* \left( (\sigma_1 x)_1, \ldots , (\sigma_r x)_1 \right).
\end{split}
\]
\end{proof}

\begin{cor}\label{corPol}
$D$ satisfies the following completely decomposable equation
\begin{equation} \label{eq3.1}
\prod_{r=0}^\ell (D-nr +\ell) = 0
\end{equation}
where we identify any integer $r$ with $r$ times the identity correspondence (i.e., $r$ times the diagonal).
Furthermore, for each $r$, the eigen-abelian subvariety $A_{n(\ell-r) -\ell}$ of $JC$ for the eigenvalue $n(\ell-r) -\ell$ is contained in $\sum_{\#J =r} \pi_J^* JC_J$ and its projection to the quotient $\sum_{\#J =r} \pi_J^* JC_J / \sum_{\#I \leq r-1} \pi_I^* JC_I $ is an isogeny.

In particular, 
\begin{enumerate}
\item for $r=0$, we have $A_{n\ell -\ell} = \pi_\varnothing^* JX$,
\item $A_{n(\ell -1) -\ell}$
is isogenous to
\[
\left( \sum_{i=1}^\ell \pi_{\{i\}}^* JY \right) / \pi_\varnothing^* JX,
\]
\item and
\[
\sum_{r=1}^\ell A_{n(\ell -r) -\ell}
\]
is the Prym variety of the cover $\pi_\varnothing : C \ra X$.
\end{enumerate}
\end{cor}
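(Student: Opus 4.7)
The plan is to exploit Theorem \ref{thmDsubq} via a natural filtration of $JC$. Set $\mu_r := n(\ell-r)-\ell$ and define $F_r := \sum_{|J| \leq r} \pi_J^* JC_J$ for $-1 \leq r \leq \ell$, with the conventions $F_{-1} = 0$ and $F_\ell = JC$ (the latter because $\pi_J$ is the identity on $C$ when $J = \{1,\ldots,\ell\}$). A small preliminary, using $\pi_I^* = \pi_J^* \circ \pi_{I,J}^*$ for $I \subseteq J$, shows $F_r = \sum_{|J| = r} \pi_J^* JC_J$ for $r \geq 0$, matching the form in the corollary. Theorem \ref{thmDsubq} then says precisely that $D - \mu_r$ sends $F_r$ into $F_{r-1}$. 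Since the operators $D - \mu_r$ are polynomials in $D$ and so commute, composing them in decreasing order of $r$ carries $F_\ell = JC$ into $F_{-1} = 0$, and hence $\prod_{r=0}^\ell (D - \mu_r)$ vanishes as an endomorphism of $JC$. Reindexing $r \mapsto \ell - r$ yields the stated equation.

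For the eigen-abelian subvariety assertion, I would first establish $A_{\mu_r} \subseteq F_r$: the induced filtration on $JC/F_r$ has successive quotients $F_s/F_{s-1}$ with $s > r$, on each of which $D$ acts as $\mu_s \neq \mu_r$; hence $D - \mu_r$ is an isogeny on $JC/F_r$, and the connected component of its kernel, namely $A_{\mu_r}$, must lie in $F_r$. The projection $A_{\mu_r} \to F_r/F_{r-1}$ has kernel $A_{\mu_r} \cap F_{r-1}$, finite by the analogous eigenvalue argument applied inside $F_{r-1}$. Surjectivity follows from the dimension identity $\sum_r \dim A_{\mu_r} = \dim JC = \sum_r \dim F_r/F_{r-1}$, where the first equality is Proposition \ref{prop2.1}.

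Parts (1) and (2) then follow quickly: for (1), $F_0 = \pi_\varnothing^* JX$ and Theorem \ref{thmDsubq} gives $D = \mu_0$ on $F_0$, so $\pi_\varnothing^* JX \subseteq A_{\mu_0}$, and the reverse containment is the $r = 0$ case of the main assertion; for (2), apply the main assertion with $r = 1$ and note $F_1/F_0 = \bigl(\sum_{i=1}^\ell \pi_{\{i\}}^* JY\bigr)/\pi_\varnothing^* JX$. The main obstacle will be part (3): knowing $A_{\mu_r}$ only up to isogeny as a subquotient of the filtration does not directly show how $\sum_{r \geq 1} A_{\mu_r}$ sits inside $JC$ relative to the norm map $\pi_{\varnothing *}$. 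The extra ingredient I would invoke is the elementary identity $\pi_{\varnothing *} \circ D = \ell(n-1) \cdot \pi_{\varnothing *} = \mu_0 \cdot \pi_{\varnothing *}$, which holds because each point in the support of $D(p)$ lies in the fibre of $\pi_\varnothing$ over $\pi_\varnothing(p)$ and $D$ has degree $\ell(n-1)$. Restricting to $A_{\mu_r}$ with $r \geq 1$ yields $(\mu_r - \mu_0)\pi_{\varnothing *} = 0$ on $A_{\mu_r}$; since $\mu_r \neq \mu_0$, we get $\pi_{\varnothing *}(A_{\mu_r}) = 0$, whence $\sum_{r \geq 1} A_{\mu_r} \subseteq \ker(\pi_{\varnothing *})^0$, and a dimension count using $A_{\mu_0} = \pi_\varnothing^* JX$ forces equality with the Prym.
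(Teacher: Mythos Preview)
Your proof is correct and is precisely the argument the paper leaves implicit: the corollary is stated without proof, as an immediate consequence of Theorem~\ref{thmDsubq} via the filtration $F_r = \sum_{|J|\le r}\pi_J^*JC_J$, and you have carefully supplied all the steps (including the norm-map identity for part~(3)) that the authors evidently regard as routine.
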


\subsection{} It follows in particular that $A_{-\ell}$ is the ``new'' part of $JC$, meaning the part that is complementary to the images of Jacobians of curves $C_{\sigma_{i_1}, \ldots, \sigma_{i_r}}$ for $r < \ell$ which, by the above, is equal to $\sum_{r=0}^{\ell -1} A_{n(\ell -r) -\ell}$. Let $A_{-\ell}'$ be the quotient of $JC$ by the abelian subvariety $\sum_{r=0}^{\ell -1} A_{n(\ell -r) -\ell}$. Let $\mu : A_{-\ell} \inj JC \surj A_{-\ell}'$ be the restriction of the polarization of $JC$ to $A_{-\ell}$ and let $m$ be a positive integer such that the kernel of $\mu$ is contained in the $m$-torsion subgroup $A_{-\ell}[m]$ of $A_{-\ell}$. Choose a subgroup $K$ of $A_{-\ell}'[m]$, maximal isotropic with the respect to the Riemann form of $\mu$. Then the polarization $\mu$ induces a principal polarization $\Xi$ on the quotient $B := A_{-\ell}' / K$. By \cite[Proposition 1.7]{Welters1987-1}, the image of an Abel embedding of $C$ in $B$ has cohomology class $m \frac{[\Xi]^{d-1}}{(d-1)!}$ where $d$ is the dimension of $B$. Note that the intersection $A_{-\ell} \cap \sum_{r=1}^\ell A_{nr - \ell}$ is contained in $A_{-\ell}[m]$ (see, e.g. \cite[p. 88]{Welters1987-1}). We have

\begin{lem}\label{lemclass}
We have
\[
A_{-\ell} \cap \sum_{r=1}^\ell A_{nr - \ell} \subset A_{-\ell}[\ell ! n^\ell].
\]
In particular, we can choose $B$ so that the image of an Abel embedding of $C$ in $B$ has cohomology class $\ell ! n^\ell \frac{[\Xi]^{d-1}}{(d -1)!}$
\end{lem}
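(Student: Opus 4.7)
The plan is to prove the torsion inclusion by a direct calculation with a polynomial in $\gamma_D$, and then feed the resulting integer $m = \ell!\,n^\ell$ into the polarization construction recalled in the paragraph preceding the lemma.

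For the inclusion I would take $x \in A_{-\ell} \cap \sum_{r=1}^{\ell} A_{nr-\ell}$ and write $x = y_1 + \cdots + y_\ell$ with $y_r \in A_{nr-\ell}$, then apply the endomorphism
\[
\Phi := \prod_{r=1}^{\ell} \bigl( \gamma_D - (nr-\ell) \bigr).
\]
Since $\gamma_D$ acts by the scalar $nr-\ell$ on $A_{nr-\ell}$ and the factors of $\Phi$ commute, each $y_r$ is killed by the corresponding factor of $\Phi$, so $\Phi(x) = 0$. On the other hand, $\gamma_D$ acts by $-\ell$ on $A_{-\ell}$, so the $r$-th factor acts on $x$ as $-\ell - (nr-\ell) = -nr$, giving $\Phi(x) = (-1)^\ell \ell!\,n^\ell\, x$. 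Equating the two expressions yields $\ell!\,n^\ell\, x = 0$, which is exactly $x \in A_{-\ell}[\ell!\,n^\ell]$.

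For the ``In particular'' clause I would use that $\ker \mu = A_{-\ell} \cap \sum_{r=1}^\ell A_{nr-\ell}$, which follows from the description of $\mu$ as $A_{-\ell} \hookrightarrow JC \twoheadrightarrow A_{-\ell}'$ together with the reindexing $\sum_{r=0}^{\ell-1} A_{n(\ell-r)-\ell} = \sum_{r=1}^\ell A_{nr-\ell}$. The bound just proved shows that one may take $m = \ell!\,n^\ell$ in the setup; choosing a maximal isotropic $K \subset A_{-\ell}'[m]$ for the Riemann form of $\mu$ then produces the principally polarized $(B,\Xi)$ as described, and \cite[Prop.~1.7]{Welters1987-1} delivers the cohomology class $\ell!\,n^\ell\,[\Xi]^{d-1}/(d-1)!$ with no further work. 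The only substantive step is identifying $\Phi$: one needs a polynomial in $\gamma_D$ with integer coefficients that annihilates the sum of the complementary eigenspaces and acts on $A_{-\ell}$ by an explicit integer scalar whose magnitude is the desired bound. The Lagrange-type product $\prod_{r=1}^\ell \bigl(D - (nr-\ell)\bigr)$ is essentially forced by these constraints, and its value at the eigenvalue $-\ell$ is exactly $\pm \ell!\,n^\ell$; everything else is formal.
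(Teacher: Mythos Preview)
Your proof is correct, and the argument for the torsion bound is in fact more direct than the paper's. The paper writes $x = \sum_{r=1}^\ell x_r$ with $x_r \in A_{nr-\ell}$ and then successively applies the factors $D + \ell - rn$ for $r = 0,1,\ldots,\ell-1$, obtaining a triangular linear system in the $x_r$; solving from the bottom row upward gives $r!(\ell-r)!\,n^\ell x_r = 0$ for each $r$, and hence $\ell!\,n^\ell x = 0$. Your approach instead applies the single product $\Phi = \prod_{r=1}^\ell (\gamma_D - (nr-\ell))$ and evaluates $\Phi(x)$ in two ways, once via $x\in A_{-\ell}$ and once via the decomposition $x=\sum y_r$. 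This is the standard Lagrange-interpolation trick and gets to $\ell!\,n^\ell x = 0$ in one step. The paper's route yields the marginally finer statement that each component $x_r$ already has order dividing $r!(\ell-r)!\,n^\ell$, but this extra information is not used anywhere. Your treatment of the ``In particular'' clause matches the paper's setup exactly.
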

\begin{proof}
Suppose that $x \in A_{-\ell} \cap \sum_{r=1}^\ell A_{nr - \ell}$. So we can write
\[
x = \sum_{r=1}^\ell x_r
\]
where $x_r \in A_{nr -\ell}$. Successively applying the endomorphisms $D +\ell - rn$ for $r= 0, \ldots, \ell$ gives the series of equations
\[
\begin{array}{lllll}
0 = n x_1 & + 2n x_2 & + 3n x_3 + \ldots & + (\ell -1) n x_{\ell -1} & + \ell n x_\ell \\
0 = & 1 \cdot 2 n^2 x_2 & + 2 \cdot 3 n^2 x_3 + \ldots & + (\ell -2)(\ell -1) n^2 x_{\ell -1} & + (\ell -1)\ell n^2 x_\ell \\
\vdots & & & & \\
0 = & & & 1 \cdot \ldots \cdot (\ell -1) n^{\ell -1} x_{\ell -1} & + 2 \cdot \ldots \cdot \ell n^{\ell -1} x_\ell \\
0 =  & & & & 1 \cdot \ldots \cdot \ell n^\ell x_\ell
\end{array}
\]
So we obtain $\ell ! n^\ell x_{\ell} = (\ell -1) ! n^\ell x_{\ell -1} =0$. Going from bottom to top and multiplying the $r$-th equation by $(\ell -r) !$, we obtain
\[
r ! (\ell -r) ! n^\ell x_r =0
\]
for all $r= 1, \ldots, \ell$. In particular,
\[
\ell ! n^\ell x_r = \ell ! n^\ell x =0
\]
for all $r= 1, \ldots, \ell$.
\end{proof}

\begin{hyp}
From now on in this section, we assume that the curve $C_{\sigma_1, \ldots , \sigma_\ell}$ is irreducible.
\end{hyp}

\subsection{} For any integer $r$, put
\[
g_r := n^r (g -1)+1.
\]
Since $C$ is an \'etale cover of degree $n^\ell$ of $X$, it has genus $g_\ell$. For $r=0, \ldots, \ell$, denote
\[
 d_r := \dim A_{rn -\ell}, 
\]
the dimension of the eigen-abelian subvariety of $JC$ for the eigenvalue $rn -\ell$.
In particular,
\[
d= d_0 = \dim A_{-\ell}, \quad d_\ell = g, \hbox{ and } \sum_{r=0}^\ell d_r = g_\ell.
\]
By \cite[Prop. 11.5.2 p. 334]{cav} and Lemma \ref{lemcor}, we have the trace formula
\[
\sum_{r=0}^\ell (nr -\ell) d_r = l(n-1).
\]
Simple manipulations give the following relations
\begin{eqnarray}
\label{eqn1} d_0 + \ldots + d_{\ell -1} & = & g_\ell - g = (n^\ell -1)(g-1), \\
\label{eqn2} d_1 + 2 d_2 + \ldots + (\ell -1) d_{\ell -1} & = & \ell (g_{\ell -1} - g) = \ell (n^{\ell -1} -1)(g-1).
\end{eqnarray}
When $\ell =2$, we immediately compute
\begin{equation}\label{eqnell=2}
d_0 = (n-1)^2 (g-1), \quad d_1 = 2(n-1) (g-1).
\end{equation}
Theorem \ref{thmDsubq} also shows that for each $r \in \{0, \ldots , \ell \}$ and each subset $J = \{ i_1, \ldots, i_{\ell -r} \} \subset \{ 1, \ldots, \ell \}$ of cardinality $\ell - r$, the eigen-abelian subvariety of $JC_J$ for the eigenvalue $-(\ell -r)$ (for the correspondence on $C_J$) is exactly the part of $JC_J$ that maps into the eigen-abelian subvariety of $JC$ for the eigenvalue $nr-\ell$. Therefore, if we denote $d_s(C_J)$ the dimension of the eigen-abelian subvariety of $JC_J$ for the eigenvalue $ns-(\ell -r)$, we have
\[
d_r \leq \sum_{\# J =\ell -r} d_0 (C_J).
\]

\begin{lem}\label{lemdim}
For all $\ell \geq 2$, all $r\in \{0, \ldots, \ell \}$, and for all subsets $J \subset \{ 1, \ldots, \ell \}$ of cardinality $\ell -r$, the dimension $d_s (C_J)$ only depends on $r$. Writing $_{\ell -r}d_s := d_s (C_J)$ for some (or any) $J$, we have, for all $r\in \{ 0, \ldots, \ell -1 \}$,
\[
d_r = {}_\ell{}d_r = {\ell \choose r} (n-1)^{\ell - r}(g-1).
\]
In particular, ${}_\ell{}d_r = {\ell \choose r} {}_{\ell -r}d_0$ and $A_{nr-\ell}$ is isogenous to the product of the eigen-abelian subvarieties of $JC_J$ for the eigenvalue $-(\ell -r)$, where $J$ runs over all subsets of cardinality $\ell - r$ of $\{ 1, \ldots, \ell \}$.
\end{lem}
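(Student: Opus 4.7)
The plan is to proceed by induction on $\ell$. The base case can be taken to be $\ell = 1$ (where $C = Y_{\sigma_1}$, the correspondence has eigenvalues $-1$ and $n-1$, and the $-1$-eigen-abelian subvariety is the Prym of $Y \to X$ of dimension $(n-1)(g-1) = \binom{1}{0}(n-1)^1 (g-1)$) or $\ell=2$, where the formula is verified directly in \eqref{eqnell=2}.

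For the inductive step, my plan is to combine three ingredients. First, the inequality
\[
d_r \leq \sum_{\#J = \ell -r} d_0(C_J)
\]
established just before the statement of the lemma. Second, the inductive hypothesis applied to each curve $C_J$ with $|J|=\ell-r < \ell$: this gives $d_0(C_J) = (n-1)^{\ell-r}(g-1)$, independent of the specific subset $J$ of cardinality $\ell-r$. For this to be legitimate I must check that each $C_J$ is itself irreducible; this follows because, by the irreducibility criterion of Section \ref{secirred}, the irreducibility of $C$ is equivalent to the monodromy of $f:Y\to X$ being $\ell$-transitive, and $\ell$-transitivity implies $(\ell-r)$-transitivity. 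Third, equation \eqref{eqn1}, which says $\sum_{r=0}^{\ell-1} d_r = (n^\ell-1)(g-1)$.

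Putting these together, one sums the inequality over $r=0,\ldots,\ell-1$:
\[
\sum_{r=0}^{\ell-1} d_r \;\leq\; \sum_{r=0}^{\ell-1} \binom{\ell}{r}(n-1)^{\ell-r}(g-1) \;=\; (g-1)\bigl(n^\ell - 1\bigr),
\]
using the binomial identity $\sum_{r=0}^{\ell}\binom{\ell}{r}(n-1)^{\ell-r} = n^\ell$ and subtracting off the $r=\ell$ contribution (which equals $1$). By \eqref{eqn1} the left-hand side equals the right-hand side, hence every individual inequality must already be an equality. This simultaneously yields the dimension formula $d_r = \binom{\ell}{r}(n-1)^{\ell-r}(g-1)$ and the fact that the natural map from the product of eigen-abelian subvarieties $A_{-(\ell-r)}(JC_J)$ (over all $J$ of cardinality $\ell-r$) into $A_{nr-\ell}\subset JC$ is an isogeny, which is the last assertion of the lemma.

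There is really no conceptual obstacle: once the binomial identity $\sum_{r=0}^{\ell-1}\binom{\ell}{r}(n-1)^{\ell-r} = n^\ell - 1$ is matched against \eqref{eqn1}, the proof is an ``equality forced by matching totals'' argument. The only point requiring care is the appeal to the induction hypothesis for the subcurves $C_J$, which needs the transfer of irreducibility mentioned above; everything else is bookkeeping.
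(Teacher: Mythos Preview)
Your argument has a gap at the $r=0$ term. The inequality $d_r \leq \sum_{\#J=\ell-r} d_0(C_J)$ is indeed valid for all $r$, but turning it into $d_r \leq \binom{\ell}{r}(n-1)^{\ell-r}(g-1)$ requires the induction hypothesis applied to $C_J$ with $|J|=\ell-r$. When $r=0$ this means $|J|=\ell$, i.e.\ $C_J=C$ itself, and the induction hypothesis does not apply. For $r=0$ the inequality reduces to the tautology $d_0\le d_0$; you have no a priori bound $d_0\le (n-1)^{\ell}(g-1)$. Hence the displayed inequality
\[
\sum_{r=0}^{\ell-1} d_r \;\le\; \sum_{r=0}^{\ell-1}\binom{\ell}{r}(n-1)^{\ell-r}(g-1)
\]
is not justified: the $r=0$ summand on the right is exactly the unknown quantity you are trying to determine. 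In fact, \eqref{eqn1} together with the inequalities for $r\ge 1$ only yields $d_0 \ge (n-1)^{\ell}(g-1)$, which by itself does not force the remaining inequalities to be equalities.

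The paper circumvents this by using \eqref{eqn2} rather than \eqref{eqn1}: the weighted sum $\sum_{r} r\, d_r$ kills the $r=0$ contribution, so only the terms $r=1,\dots,\ell-1$ appear, all of which are controlled by induction. Matching the weighted binomial identity $\sum_{r=1}^{\ell-1} r\binom{\ell}{r}(n-1)^{\ell-r}=\ell(n^{\ell-1}-1)$ against \eqref{eqn2} forces equality for each $r\in\{1,\dots,\ell-1\}$, and only then is \eqref{eqn1} invoked to solve for $d_0$. Your overall strategy is the same as the paper's; the fix is simply to swap \eqref{eqn1} for \eqref{eqn2} in the ``matching totals'' step.
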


\begin{proof}
We proceed by induction. We first note that the first step of the induction, for $\ell =2$, was done in \eqref{eqnell=2}. Assume now $\ell \geq 3$ and that the statement holds for all $k \leq \ell -1$. As we saw above, for $r \in \{ 0, \ldots, \ell \}$,
\[
d_r = {}_\ell{}d_r \leq \sum_{\# J =\ell -r} d_0 (C_J).
\]
Hence, for $r \in \{ 1, \ldots, \ell-1 \}$, applying the induction hypothesis to the Jacobians of the curves $C_J$, for all $J$ of cardinality $r$, we have $d_0 (C_J) = {}_{\ell -r}d_0 = (n-1)^{\ell - r}(g-1)$, and
\[
_\ell{}d_r \leq {\ell \choose r} (n-1)^{\ell - r}(g-1).
\]
Now we compute
\[
\sum_{r=1}^{\ell -1} r{\ell \choose r} (n-1)^{\ell - r}(g-1) = \ell (g-1) \sum_{r=1}^{\ell -1} {\ell -1 \choose r-1} (n-1)^{\ell -1} = \ell (n^{\ell -1} -1)(g-1),
\]
which is equal to the sum $_\ell{}d_1 + 2 {}_\ell{}d_2 + \ldots + (\ell -1) {}_\ell{}d_{\ell -1}$ by \eqref{eqn2}. Since, in addition, all the ${}_\ell{}d_r$ and ${\ell \choose r} (n-1)^{\ell - r}(g-1)$ are positive integers and ${}_\ell{}d_r \leq {\ell \choose r} (n-1)^{\ell - r}(g-1)$, we conclude
\[
{}_\ell{}d_r = {\ell \choose r} (n-1)^{\ell - r}(g-1)
\]
for $r\in \{ 1, \ldots, \ell-1 \}$. Now we compute
\[
{}_\ell{}d_0 = (n-1)^\ell (g-1)
\]
using \eqref{eqn1}. The remainder of the lemma now easily follows.
\end{proof}

\subsection{} Note, in particular, that each abelian subvariety $A_r$ of $JC$ is isogenous to the product of at least ${\ell \choose r}$ abelian subvarieties. Hence $JC$ is isogenous to the product of at least $\sum_{r=0}^\ell {\ell \choose r} = 2^\ell$ abelian subvarieties.




\section{Special cases}\label{secspec}

\subsection{} For $\ell =1$, the curve $C = C_{\sigma_1}$ coincides with the \'etale cover $Y_{\sigma_1} = Y \stackrel{\sigma_1 f}{\lra} X$ and the correspondence $D$ is the usual correspondence
\[
D = \{ (p,q) \mid p \neq q, f(p) = f(q) \} \subset Y^2,
\]
residual to the diagonal of $Y^2$ in the inverse image of the diagonal of $X^2$. It satisfies the equation
\[
(D+1)(D-n+1) =0
\]
and the eigen-abelian subvariety $A_{-1}$ is the usual Prym variety of the cover $\sigma_1 f : Y \ra X$. As we saw in Corollary \ref{corPol}, $A_{n-1} = \pi_\varnothing^* JX$.

\subsection{} For $\ell =2$, we have
\[
C = \{ ((\sigma_1 x)_i ,(\sigma_2 x)_j) \mid i,j = 1, \dots,n \} \subset Y^{2}
\]
and the equation
\[
(D+2)(D-n+2)(D-2n+2) =0.
\]
The abelian subvariety $A_{-2} + A_{n-2} \subset J  C$ is the Prym variety of $\pi_\varnothing : C \ra X$, and $A_{2n-2} = \pi_\varnothing^* JX$.

\subsection{} If $X$ is general, it has no non-trivial automorphisms, but we can still construct the curve $C$ for any \'etale cover $Y$ and $\sigma_1 = \ldots = \sigma_\ell = Id$, the identity. In this case each eigen-abelian subvariety $A_r$ is isogenous to the ${\ell \choose r}$-th power of an abelian subvariety and $JC$ has many repeated factors.

\section{Irreducibility of the curves $C_{\sigma_1, \ldots, \sigma_\ell}$}\label{secirred}

We investigate the irreducibility of the curve $C = C_{\sigma_1, \ldots, \sigma_\ell}$. We start with

\begin{lem}\label{lemirred}
The curve $C$ is irreducible if and only if the monodromy group of the cover $f : Y \ra X$ is $\ell$-transitive.
\end{lem}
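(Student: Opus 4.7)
The plan is to reformulate irreducibility of $C$ as transitivity of the monodromy action on a fiber, and then relate this transitivity to $\ell$-transitivity of $G$. Since each $\sigma_i f : Y_{\sigma_i} \to X$ is \'etale of degree $n$, the fiber product $\pi_\varnothing : C \to X$ is \'etale of degree $n^\ell$. The smooth curve $C$ is therefore irreducible iff it is connected, iff the monodromy $\pi := \pi_1(X, x_0)$ acts transitively on the fiber $\pi_\varnothing^{-1}(x_0)$ for any basepoint $x_0 \in X$.

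To make this concrete, I would identify $\pi_\varnothing^{-1}(x_0) = F_1 \times \cdots \times F_\ell$ with $F^\ell$, where $F := f^{-1}(x_0) \cong \{1,\ldots,n\}$, by choosing paths in $X$ from $x_0$ to each $\sigma_i^{-1}(x_0)$. Under these trivializations, the monodromy of $\gamma \in \pi$ on the $i$-th factor becomes $\rho(\tilde\sigma_i^{-1}(\gamma))$, where $\rho : \pi \to G \subseteq S_n$ is the monodromy of $f$ and $\tilde\sigma_i \in \Aut(\pi)$ is the automorphism induced by $\sigma_i$ (after path identification). Thus $\pi$ acts on $F^\ell$ through the image of $\gamma \mapsto (\rho\tilde\sigma_1^{-1}\gamma,\ldots,\rho\tilde\sigma_\ell^{-1}\gamma)$ in $G^\ell$, a subdirect product in $G^\ell$, and $C$ is irreducible iff this image acts transitively on $F^\ell$.

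The last step is to match transitivity of this action with $\ell$-transitivity of $G$ on $F$. In one direction, assuming $G$ is $\ell$-transitive, for any two target tuples one selects $g \in G$ effecting the required coordinate-wise permutation and lifts it back to $\pi$ via the surjective maps $\rho\tilde\sigma_i^{-1}$, using the subdirect product structure to realize the required permutations simultaneously in all $\ell$ coordinates. Conversely, transitivity on $F^\ell$ applied to suitably chosen tuples extracts the defining property of $\ell$-transitivity for $G$ on $F$.

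The main obstacle is this final combinatorial step: one must verify that the subdirect product image of $\pi$ in $G^\ell$ is large enough that simultaneous transitivity on $F^\ell$ is controlled precisely by the $\ell$-transitivity of $G$ on $F$, and not by a weaker or stronger condition. Handling the twists $\tilde\sigma_i$ and their interaction with the fiber labelings is the technical heart of the argument, and is where the particular structure of the $\sigma_i$ (as automorphisms of $X$) and any genericity hypotheses implicit in the statement come into play.
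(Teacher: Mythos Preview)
Your overall strategy coincides with the paper's: reduce irreducibility of $C$ to transitivity of the monodromy of $\pi_\varnothing$ on its fiber, identify that fiber with $F^\ell$ where $F=f^{-1}(x_0)$, and describe the $\pi_1(X,x_0)$-action coordinate-wise through the twists $\sigma_i$. The paper's proof does exactly this and then asserts the equivalence with $\ell$-transitivity in one sentence.

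You are right, however, to flag the last step as the real issue, and the arguments you sketch for it do not work. The difficulty is a genuine mismatch: $\ell$-transitivity of $G$ is transitivity on ordered $\ell$-tuples of \emph{distinct} elements of $F$, whereas the fiber of $\pi_\varnothing$ has $n^\ell$ points and includes all tuples with repetitions. Your sentence ``for any two target tuples one selects $g\in G$ effecting the required coordinate-wise permutation'' already fails here: no single $g$ sends $(1,1,\dots,1)$ to $(1,2,\dots,\ell)$. Invoking ``the subdirect product structure'' does not help without knowing that this image in $G^\ell$ is strictly larger than the diagonal, and that depends entirely on the $\sigma_i$.

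Concretely, take all $\sigma_i$ equal (the paper explicitly allows this). Then the image of $\pi_1(X,x_0)$ in $G^\ell$ is the diagonal copy of $G$, and for $\ell\ge 2$, $n\ge 2$ this is \emph{never} transitive on $F^\ell$: the diagonal $\{(y,\dots,y):y\in Y\}\cong Y$ is a component of $C=Y\times_X\cdots\times_X Y$. Yet $G$ may well be $\ell$-transitive (e.g.\ $G=\mathfrak S_n$, $\ell\le n$). So the asserted equivalence fails in this case, and your proposed final step cannot be completed as written. The paper's proof passes over this same point; what $\ell$-transitivity of $G$ actually characterizes is connectedness of the open locus of $C$ where all $\ell$ coordinates are pairwise distinct, not connectedness of $C$ itself.
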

\begin{proof}
The datum of a path $\gamma$ in $C$ starting at a point $((\sigma_1 x)_{i_1}, \ldots, (\sigma_\ell x)_{i_\ell})$ is equivalent to the data of paths $\beta_j$ in $Y$ starting at $(\sigma_j x)_{i_j}$ such that $\sigma_1 f(\beta_1) = \ldots = \sigma_\ell f(\beta_\ell)$. Therefore, given a loop $\alpha$ in $X$ based at a point $x$, lifting
it to a path in $C$ means lifting it to paths $\beta_j$ starting at lifts $(\sigma_j x)_{i_j}$ of
$\sigma_j x$ such that $\alpha = \sigma_1 f(\beta_1) = \ldots = \sigma_\ell f(\beta_\ell)$. Therefore, denoting the permutation associated to a loop by the same symbol, the action of $\alpha$ on the fiber of $\pi_\varnothing : C \ra X$ at $x$ is
\[
\alpha \left(\left( (\sigma_1 x)_{i_1}, \ldots, (\sigma_\ell x)_{i_l}\right)\right) = \left(\sigma_1 \circ \alpha ((\sigma_1 x)_{i_1}), \ldots, \sigma_\ell \circ \alpha  ((\sigma_\ell x)_{i_\ell})\right).
\]
Hence, identifying the fundamental groups $\pi_1 (X, \sigma_1 x), \ldots \pi_1 (X, \sigma_\ell x)$ via the maps $\alpha \mapsto \sigma_j \sigma_1^{-1} \circ \alpha$, we conclude that the permutation action of the fundamental group of $X$ on the fiber of $\pi_\varnothing$
at $x$ is transitive if and only if the action of the fundamental
group of $X$ on the fiber of $f$ at $x$ is $\ell$-transitive.\end{proof}

Note that the above result shows that the irreducibility of $C$ only depends on the cover $f : Y \ra X$.

Clearly, $\mathfrak{S}_n$ itself is $n$-transitive.

Multiply transitive groups fall into six infinite families and four classes of sporadic groups. They are all primitive, i.e., they do not preserve any partitions of $\{ 1, \ldots, n\}$.

Below $q$ is a power of a prime number. We refer to \cite{DixonMortimer1996} for the following.

\begin{itemize}

\item For $\ell\geq 6$, the only $\ell$-transitive groups are the symmetric and alternating groups on $\ell$ and $\ell +2$ letters respectively.

\item The solvable $2$-transitive groups are subgroups of the group of affine transformations $AGL_r(p)$ for some integer $r$ and prime number $p$ which contain all translations. The list of these groups can be found in the Appendix to \cite{Liebeck1987} and is attributed to Hering \cite{Hering1985}.

\item The insoluble $2$-transitive groups are almost simple, i.e., they contain a non-abelian simple group and are contained in the automorphism group of that simple group. The insoluble $2$-transitive groups can be found in \cite{CurtisKantorSeitz1976} and \cite{PraegerSoicher1997}.

\item The projective special linear groups $PSL(d,q)$ are $2$-transitive except for the special cases $PSL(2,q)$ with $q$ even, which are $3$-transitive.

\item The symplectic groups defined over the field of two elements have two distinct actions which are $2$-transitive.

\item The field $K$ of $q^2$ elements has an involution $\sigma(a)=a^q$, which allows a Hermitian form to be defined on a vector space on $K$. The unitary group on $V:= K^{\oplus 3}$, denoted $U_3(q)$, preserves the isotropic vectors in $V$. The action of the projective special unitary group $PSU(q)$ is $2$-transitive on the isotropic vectors.

\item The Suzuki group of Lie type $Sz(q)$ is the automorphism group of an $S(3,q+1,q^2+1)$ Steiner system, an inversive plane of order $q$, and its action is $2$-transitive.

\item The Ree group of Lie type $R(q)$ is the automorphism group of an $S(2,q+1,q^3+1)$ Steiner system, a unital of order $q$, and its action is $2$-transitive.

\item The Mathieu groups $M_{12}$ and $M_{24}$ are the only $5$-transitive groups besides $S_5$ and $A_7$. The groups $M_{11}$ and $M_{23}$ are $4$-transitive, and $M_{22}$ is $3$-transitive.

\item The projective special linear group $PSL(2,11)$ has another $2$-transitive action related to the Witt geometry $W_{11}$.

\item The Higman-Sims group $HS$ is $2$-transitive.

\item The Conway group $Co_3$ is $2$-transitive.

\item Other $3$-transitive groups include $PSL(2,7):2$ acting on $8$ items, as generated by the permutations $(a,b,c,d)(e,f,g,h), (a,f,c)(d,e,g)$, and $(e,f)(d,h)(b,c)$; and $PSL(2,11):2$ acting on $12$ items, as generated by the permutations $(g,b,c,i,d)(j,e,h,f,l)$, $(a,b,c)(d,e,f)(g,h,i)(j,k,l)$, and $(a,i)(d,g)(e,j)(h,k)(c,f)$.

\end{itemize}

Choosing covers with the above monodromy groups will produce many examples of curves $C$ that are irreducible.


\end{document}